\documentclass[11pt]{amsart}

\usepackage{amsfonts,amsmath,amssymb,amscd}

\usepackage{amsthm}
\usepackage{latexsym}
\usepackage[dvips]{graphicx}
\usepackage[dvips]{psfrag}
\usepackage[dvips]{color}
\usepackage{xypic}
\usepackage[abs]{overpic}
\usepackage{caption}
\usepackage[all]{xy}
\usepackage{pinlabel}

\theoremstyle{plain}
\newtheorem*{theorem*}{Theorem}
\newtheorem*{lemma*} {Lemma}
\newtheorem*{corollary*} {Corollary}
\newtheorem*{proposition*}{Proposition}
\newtheorem*{conjecture*}{Conjecture}
\newtheorem{theorem}{Theorem}[section]
\newtheorem{lemma}[theorem]{Lemma}

\theoremstyle{remark}

\theoremstyle{definition}

\newtheoremstyle{citing}
  {}
  {}
  {\itshape}
  {}
  {\bfseries}
  {.}
  {.5em}
{\thmnote{#3}}

\theoremstyle{citing}

\textwidth=5.8in
\voffset=0.25in
\oddsidemargin.25in
\evensidemargin.25in
\marginparwidth=.85in

\numberwithin{equation}{section}

\textwidth=5.8in
\voffset=0.25in
\oddsidemargin.25in
\evensidemargin.25in
\marginparwidth=.85in

\begin{document}

\title[The disk complex and 2-bridge knots]{The disk complex and 2-bridge knots}

\author{Sangbum Cho}\thanks{The first-named author is supported in part by Basic Science Research Program through the National Research Foundation of Korea (NRF-2015R1A1A1A05001071) funded by the Ministry of Science, ICT and Future Planning.}

\address{
Department of Mathematics Education \newline
\indent Hanyang University, Seoul 133-791, Korea}
\email{scho@hanyang.ac.kr}

\author{Yuya Koda}\thanks{The second-named author is
supported in part by the Grant-in-Aid for Scientific Research (C),
JSPS KAKENHI Grant Number 17K05254.}

\address{
Department of Mathematics \newline
\indent Hiroshima University, 1-3-1 Kagamiyama, Higashi-Hiroshima, 739-8526, Japan}
\email{ykoda@hiroshima-u.ac.jp}


\date{\today}

\begin{abstract}
We give an alternative proof of a result of Kobayashi and Saeki that every genus one $1$-bridge position of a non-trivial $2$-bridge knot is a stabilization.
\end{abstract}

\maketitle

\section{Introduction}
\label{sec:introduction}
A {\it genus one $1$-bridge position} of a knot $K$ in $S^3$, simply a $(1, 1)$-position of $K$, is a splitting of $S^3$ into two solid tori such that $K$ intersects each of the solid tori in a properly embedded trivial arc.
Similary, a {\it $2$-bridge position} of a knot $K$ in $S^3$ is a splitting of $S^3$ into two $3$-balls such that $K$ intersects each of the $3$-balls in a pair of properly embedded trivial arcs.
Here, a properly embedded arc $\alpha$ is said to be {\it trivial} if there exists an arc $\beta$ in the boundary such that $\alpha \cup \beta$ forms a loop that bounds a disk.
A knot which admits a $2$-bridge position is called a {\it $2$-bridge knot}.
Let $\alpha$ be one of the four arcs in a $2$-bridge position of $K$, that is, $\alpha$ is a trivial arc in one of the two $3$-balls, say $B$.
Let $N(\alpha)$ be a regular neighborhood of $\alpha$ in $B$.
Then the splitting of $S^3$ into the two solid tori $\overline{B - N(\alpha)}$ and $\overline{S^3 - B} \cup N(\alpha)$ turns out to be a $(1, 1)$-position of $K$, which we call a {\it stabilization} of the $2$-bridge position.

It is known that the $2$-bridge position of a $2$-bridge knot $K$ is unique up to equivalence, proved by H. Schubert \cite{Sch56}, so there are exactly four equivalence classes of stabilization depending on the choice of one of the four arcs.
Further T. Kobayashi and O. Saeki showed that any $(1, 1)$-position of a $2$-bridge knot can be obtained in this way, which is the main result of \cite{KS00} stated as follows.

\begin{theorem}
\label{thm:main_theorem}
Every $(1, 1)$-position of a non-trivial $2$-bridge knot is a stabilization.
\end{theorem}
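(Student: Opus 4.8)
The plan is to study a $(1,1)$-position through the lens of the disk complex of one of the two solid tori, since the title promises this tool. Let $V \cup_\Sigma W$ be a given $(1,1)$-splitting of $S^3$, where $K$ meets each of $V$ and $W$ in a single trivial arc. My first step is to set up the relevant combinatorial object: for the solid torus $V$ containing the trivial arc $t_V$, I would consider the complex whose vertices are isotopy classes of essential disks in $V$ that are disjoint from $t_V$ (equivalently, disks in the once-punctured solid torus $V \smallsetminus N(t_V)$), with simplices spanned by mutually disjoint such disks. The trivial arc structure means that in each side there is a canonical ``meridian-type'' disk separating off the arc, and the combinatorics of how these disks on the two sides intersect along $\Sigma$ should encode the whole position.

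Next I would pass from disks to their boundary curves on the twice-punctured torus $\Sigma_K = \Sigma \smallsetminus N(K)$, reducing the problem to a curve-complex / train-track computation. A stabilization is distinguished by the existence of a single disk (the cocore of the neighborhood $N(\alpha)$ that was attached) which, when we destabilize, exhibits $K$ as living in a lower-complexity position, namely the $2$-bridge position. Concretely, I expect that being a stabilization is equivalent to the existence of a meridian disk $D \subset V$ disjoint from $t_V$ and a meridian disk $E \subset W$ disjoint from $t_W$ whose boundaries meet $\Sigma_K$ in a controlled, minimal-intersection configuration; detecting such a pair is the crux. The strategy is therefore to take an arbitrary pair of meridian disks $(D, E)$, put $\partial D$ and $\partial E$ into minimal position on $\Sigma$, and run a disk-swapping / surgery argument to reduce the intersection number $|\partial D \cap \partial E|$.

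The engine of the reduction is an innermost-disk and outermost-arc analysis combined with the very special homological constraints imposed by the $2$-bridge condition. Because $K$ is a $2$-bridge knot, its double branched cover is a lens space, and the fundamental group of the knot exterior has a particularly simple (two-generator, one-relator) structure; I would exploit this to show that the intersection pattern of $\partial D$ and $\partial E$ cannot be ``essentially knotted.'' After each surgery along an innermost disk of intersection, I would check that the resulting curve is still the boundary of a meridian disk disjoint from the arc, so that the complexity $|\partial D \cap \partial E|$ strictly decreases while staying within the admissible class of disks. Iterating, the process terminates in a pair $(D_0, E_0)$ realizing the minimal configuration, which I would then identify directly with the standard $2$-bridge picture and hence exhibit the original position as one of its four stabilizations.

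The hard part, I expect, will be the termination and control of the surgery argument: ensuring that each disk-swap genuinely reduces complexity without re-introducing intersections elsewhere, and ruling out stable configurations that are not already stabilized. This is exactly where the rigidity of the $2$-bridge knot (uniqueness of its $2$-bridge position, proved by Schubert, together with the lens-space structure of the double cover) must be fed into the argument in an essential way; without it the curve-complex reduction could stall at a genuinely non-stabilized minimal position. I anticipate that organizing the case analysis so that the $2$-bridge hypothesis forecloses every non-trivial stable configuration will be the main technical obstacle.
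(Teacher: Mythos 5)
Your proposal correctly identifies the target configuration (a meridian disk in each solid torus of the $(1,1)$-splitting, boundaries meeting once, one disk meeting $K$ once and the other missing $K$), but the engine you propose for producing it is not a well-defined operation, and the idea that actually drives the proof is absent. If $D$ is properly embedded in $V_0$ and $E$ in $W_0$, then $D\cap E=\partial D\cap\partial E$ is a finite set of points on the splitting surface; there are no circles or arcs of intersection between $D$ and $E$, so there is nothing to do an innermost-disk or outermost-arc surgery along, and once $\partial D$ and $\partial E$ are in minimal position on the twice-punctured torus their geometric intersection number is an isotopy invariant that no cut-and-paste on one side can decrease. Innermost/outermost arguments reduce intersections between two disks on the \emph{same} side (this is exactly how the paper uses them, in Lemma \ref{lem:equivariant cut-and-paste}, to make a disk equivariant under an involution). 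Beyond this, you explicitly defer the crux --- ruling out a ``genuinely non-stabilized minimal position'' --- to an unspecified case analysis, so the proposal is a plan whose central step is unproven.

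The paper supplies that missing mechanism by working upstairs rather than downstairs. It passes to the double branched cover $L$, a lens space, where the $(1,1)$-position lifts to a genus-$2$ Heegaard splitting $(V,W;\Sigma)$ invariant under the covering involution $\phi$. The combinatorial object is not the disk complex of a solid torus rel the trivial arc, but the \emph{primitive disk complex} $\mathcal P(V)$ of the genus-$2$ handlebody, whose complete structure (Lemma \ref{lem:structure}) is imported from \cite{Cho13}, \cite{CK16}, \cite{CK17}; from it one builds a tree $\mathcal{PT}(V)$ on which $\phi$ acts simplicially. A finite-order automorphism of a tree has a fixed point, and the homological constraint $\phi_*(x)=-x$ (Lemma \ref{lem:action}) excludes the edge types whose endpoints could be swapped; this yields a $\phi$-invariant primitive disk $J$ with a dual disk $R$ disjoint from $\phi(R)$ (Lemma \ref{lem:key lemma}), which descend to the desired meridian disks $J_0$ and $R_0$. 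This fixed-point-on-a-tree argument is precisely the replacement for your termination step; without an analogue of it, your reduction has no reason to halt at a stabilized configuration.
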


In this work, we give an alternative proof of Theorem \ref{thm:main_theorem}.
The key idea is to consider the $2$-fold cover $L$ of $S^3$ branched along a $2$-bridge knot $K$ and the covering involution $\phi$ of $L$ over $S^3$.
It is well known that $L$ is a lens space and the preimages of the two solid tori of a $(1, 1)$-position of $K$ are genus two handlebodies, say $V$ and $W$.
Each is invariant under the involution $\phi$, as is the preimage of $K$.
We will construct a simplicial complex $\mathcal{PT}(V)$ for the handlebody $V$, called the {\it primitive tree}.
Then $\phi$ defines a simplicial involution of $\mathcal{PT}(V)$ and the existence of a fixed point of the involution on $\mathcal{PT}(V)$ enables us to find some special meridian disks on the two solid tori of the $(1, 1)$-position.
Consequently, we find a $2$-bridge position of $K$ which stabilizes to the original $(1, 1)$- position.

In Section \ref{sec:The disk complex}, we introduce a well-known simplicial complex, called the {\it non-separating disk complex} for a genus-$2$ handlebody $V$.
When $V$ is one of the handlebodies of a genus-$2$ Heegaard splitting for a lens space, the non-separating disk complex for $V$ admits a special subcomplex, called the {\it primitive disk complex}.
In Section \ref{sec:The primitive tree}, the combinatorial structure of the primitive disk complex for each lens space is described, which was done in the previous works \cite{Cho13}, \cite{CK16} and \cite{CK17}.
From the primitive disk complex,  we construct the {\it primitive tree} and introduce some properties of the simplicial automorphisms of the complex that we need.
The proof of Theorem \ref{thm:main_theorem} will be given in the final section.

We use the standard notation $L = L(p, q)$ for a lens space, where its first homology group $H_1(L)$ is the cyclic group of order $|p|$.
We refer \cite{Rol76} to the reader.
The integer $p$ can be assumed to be positive, and any two lens spaces $L(p, q)$ and $L(p', q')$ are diffeomorphic if and only if $p = p'$ and  $q'q^{\pm 1} \equiv \pm 1 \pmod p$.
Thus, we will assume $1 \leq q \leq p/2$.
For convenience, we will not distinguish a disk (or union of disks) and a diffeomorphism from their isotopy classes in their notation.
We will denote by $N(X)$ and $\overline{X}$ a regular neighborhood of $X$ and the closure of $X$ for a subspace $X$ of a space, where the ambient space will be clear from the context.

Finally we remark that the key idea of this work came from Darryl McCullough.
The authors are deeply grateful to him for his valuable advice and comments.

\section{The non-separating disk complex of a genus-$2$ handlebody}
\label{sec:The disk complex}

The {\it non-separating disk complex} for a genus-$2$ handlebody $V$, denoted by $\mathcal{D}(V)$, is a simplicial complex whose vertices are the isotopy classes of essential non-separating disks in $V$ such that a collection of $k+1$ vertices spans a $k$-simplex if and only if it admits a collection of representative disks which are pairwise disjoint.
It is easy to see that $\mathcal{D}(V)$ is $2$-dimensional and every edge of $\mathcal D(V)$ is contained in infinitely but countably many $2$-simplices.
In \cite{McC91}, it is proved that $\mathcal D(V)$ and the link of any vertex of $\mathcal D(V)$ are all contractible.
Thus, the dual complex of $\mathcal D(V)$ is a tree, which we call the {\it dual tree} of $\mathcal D(V)$ simply. The dual tree is the subcomplex of the barycentric subdivision of $\mathcal D(V)$ spanned by the barycenters of the $1$-simplices and $2$-simplices.
See Figure \ref{fig:disk_complex}.

\begin{center}
\labellist
 \pinlabel {$D$} [B] at 13 140
 \pinlabel {$E$} [B] at 345 25
 \pinlabel {$\Delta_1$} [B] at 60 80
 \pinlabel {$\Delta_2$} [B] at 125 55
 \pinlabel {$\Delta_3$} [B] at 172 95
 \pinlabel {$\Delta_4$} [B] at 180 50
 \pinlabel {$\Delta_5$} [B] at 242 55
 \pinlabel {$\Delta_6$} [B] at 290 70
 \endlabellist
\includegraphics[width=7.5cm]{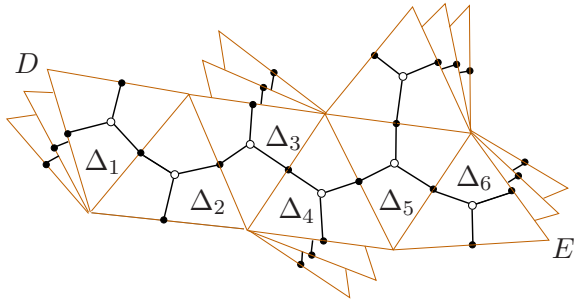}
\captionof{figure}{A portion of the disk complex $\mathcal{D}(V)$ and the dual complex, and the corridor connecting $D$ and $E$.}
\label{fig:disk_complex}
\end{center}

Let $D$ and $E$ be vertices of $\mathcal{D}(V)$ that are not adjacent to each other.
In the barycentric subdivision of $\mathcal{D}(V)$, the links of the vertices $D$ and $E$ are disjoint trees, and hence there exists a unique shortest path in the dual tree of $\mathcal D(V)$ connecting the two links.
Let $v_1,$ $w_1$, $v_2$, $w_2, \ldots, v_{n-1}, w_{n-1}$, $v_n$ be the sequence of the consecutive vertices of this path.
We note that $v_i$ is the barycenter of a $2$-simplex, denoted by $\Delta_i$, for each $i \in \{1, 2, \cdots n\}$.
The full subcomplex of $\mathcal{D}(V)$ spanned by the vertices of $\Delta_1$, $\Delta_2, \ldots, \Delta_n$ is called the {\it corridor} connecting $D$ and $E$, and we denote it just by the sequence $\{\Delta_1 , \Delta_2 , \ldots, \Delta_n \}$.
The vertices $D$ and $E$ are the vertices of $\Delta_1$ and $\Delta_n$ respectively, which are not contained in the edges $\Delta_1 \cap \Delta_2$ and $\Delta_{n-1} \cap \Delta_n$ respectively.

\section{The primitive trees}
\label{sec:The primitive tree}

We denote by $(V, W; \Sigma)$ a genus-$2$ Heegaard splitting of a lens space $L = L(p, q)$.
That is, $V$ and $W$ are genus-$2$ handlebodies such that $V \cup W = L$ and $V \cap W = \partial V = \partial W = \Sigma$, a genus-$2$ closed orientable surface in $L$.
A disk $E$ properly embedded in $V$ is said to be {\it primitive} if there exists a disk $E'$ properly embedded in $W$ such that the two loops $\partial E$ and $\partial E' $ intersect transversely in a single point.
We call such a disk $E'$ a {\it dual disk} of $E$, which is also primitive in $W$ having a dual disk $E$.
The pair of any two disjoint, non-isotopic primitive disks $D$ and $E$ in $V$ is called simply a {\it primitive pair}, and if a disk $E'$ is a dual disk of both $D$ and $E$, then $E'$ is called a {\it common dual disk} of the pair.
Primitive disks are necessarily non-separating, and both $W \cup N(E)$ and $V \cup N(E')$ are solid tori.

The {\it primitive disk complex} $\mathcal P(V)$ for the splitting is then defined to be the full subcomplex of $\mathcal D(V)$ spanned by the vertices of primitive disks.
For each primitive disk $E$ in $V$, it is easy to find infinitely many non-isotopic primitive disks in $V$ disjoint from $E$, so each vertex of $\mathcal P(V)$ has infinite valency for any lens space.
The primitive disk complex $\mathcal P(W)$ for $W$ is defined in the same way, which is isomorphic to $\mathcal P(V)$, since it is known that any two genus-$2$ Heegaard splittings of a lens space are isomorphic to each other (see \cite{Bon83} and \cite{BO83}).

The combinatorial structure of $\mathcal P(V)$ for each lens space was fully studied in \cite{Cho13}, \cite{CK16} and \cite{CK17}.
We describe it as follows. To make the statement simple, we will say that an edge of $\mathcal P(V)$ is of {\it type-$0$} ({\it type-$1$, type-$2$,} respectively) if, up to isotopy, a primitive pair representing the end vertices of the edge has no common dual disk (has a unique common dual disk, has exactly two common dual disks which form a primitive pair in $W$, respectively).

\begin{lemma}[Theorem 4.5 \cite{CK16}]
Let $L = L(p, q)$ be a lens space with $1 \leq q \leq p/2$, and let $(V, W; \Sigma)$ be a genus-$2$ Heegaard splitting of $L$.
If $p \equiv \pm 1 \pmod q$, then the primitive disk complex $\mathcal P(V)$ is contractible, and we have one of the following cases.
\begin{enumerate}
\item If $q \neq 2$ and $p \neq 2q + 1$, then $\mathcal P(V)$ is a tree, and every vertex has infinite valency.
    In this case,
    \begin{enumerate}
    \item if $p=2$ and $q=1$, then every edge is of type-$2$.
    \item if $p \geq 4$ and $q=1$, then every edge is of type-$1$.
    \item if $q \neq 1$, then every edge is of either type-$0$ or type-$1$, and infinitely many edges of type-$0$ and of type-$1$ meet in each vertex.
    \end{enumerate}
\item If $q = 2$ or $p=2q+1$, then $\mathcal P(V)$ is $2$-dimensional, and every vertex meets infinitely many $2$-simplices.
    In this case,
    \begin{enumerate}
    \item if $p = 3$, then every edge is of type-$1$ and is contained in a unique $2$-simplex.
    \item if $p = 5$, then every edge is of either type-$0$ or type-$1$. Among the three edges of a $2$-simplex, exactly one is of type-$1$ and the remaining two are of type-$0$. Every edge of type-$0$ is contained in exactly two $2$-simplices, while every edge of type-$1$ in a unique $2$-simplex.
    \item if $p \geq 7$, then every edge is of either type-$0$ or type-$1$. Among the three edges of a $2$-simplex, exactly one is of type-$1$ and the remaining two are of type-$0$.
        Every edge of type-$0$ is contained in a unique $2$-simplex.
        Every edge of type-$1$ is contained in a unique $2$-simplex or in no $2$-simplex.
    \end{enumerate}
\end{enumerate}
If $p \not\equiv \pm 1 \pmod q$, then $\mathcal P(V)$ is not connected, and it consists of infinitely many tree components.
All the tree components are isomorphic to each other. Any vertex of $\mathcal P(V)$ has infinite valency, and further, infinitely many edges of type-$0$ and of type-$1$ meet in each vertex.
\label{lem:structure}
\end{lemma}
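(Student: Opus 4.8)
The plan is to reduce the whole statement to an explicit combinatorial model for primitive disks and then read off each case from the arithmetic of the pair $(p,q)$. First I would fix a reference primitive disk $E_0$ in $V$ together with a dual disk $E_0'$ in $W$, so that $\partial E_0$ and $\partial E_0'$ meet once on $\Sigma$. Cutting $V$ along $E_0$ gives a solid torus, and recovering the genus-one Heegaard torus of $L(p,q)$ from this data lets me encode any primitive disk $D$ by the isotopy class of $\partial D$ on $\Sigma$, recorded in turn by a slope together with the residue data of $q/p$. The point is that primitivity of $D$ is equivalent to a single linear/congruence condition modulo $p$ (the \emph{primitivity equation}), so I would parametrize the primitive disks disjoint from $E_0$ by the integer solutions of this equation in a bounded range, and set up all further counting against this parametrization.

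The crux is then to count, for a primitive pair $\{D,E\}$, its common dual disks, since this number ($0$, $1$, or $2$) is exactly the edge type. I would first describe the dual disks of a single primitive disk $E$ as a controlled family (arising from band-summing a dual meridian while preserving a single intersection), and then impose disjointness from the second disk $D$; the surviving common duals correspond to solutions of the relevant congruence lying in a short interval. This is precisely where the hypotheses appear. The condition $p \equiv \pm 1 \pmod q$ is what makes the congruence solvable so that a canonical edge exists from any disk toward a reference one (hence, ultimately, connectivity), while $q = 2$ and $p = 2q+1$ are the degenerate regimes in which two duals coincide or an extra solution is forced. The small exceptional values $p = 2, 3, 5$ fall out as the cases where the solution interval is so short that solutions collapse: this yields the all-type-$2$ behavior of $\mathbb{RP}^3$, the unique-$2$-simplex-per-edge pattern when $p=3$, and the two-versus-unique $2$-simplex dichotomy at $p=5$.

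Next I would determine the $2$-simplices, i.e.\ triples of pairwise disjoint primitive disks. Since $\mathcal{D}(V)$ is at most $2$-dimensional, a primitive triple is just a complete disjoint disk system that happens to be primitive. Using the primitivity equation again, I expect to show that a third primitive disk disjoint from a given primitive pair exists if and only if $q = 2$ or $p = 2q+1$; this produces the $2$-dimensionality of case (2) and its failure—forcing a tree—in case (1). Tracking which edge of such a triple carries a common dual disk gives the stated ``exactly one type-$1$, two type-$0$'' pattern, and counting how many triples contain a fixed edge gives the multiplicities $1$ or $2$.

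Finally, for the global topology I would prove connectivity and contractibility when $p \equiv \pm 1 \pmod q$ by constructing, for any primitive disk, an explicit edge-path in $\mathcal{P}(V)$ to $E_0$ that decreases the geometric intersection number with $E_0$, and then promote connectivity to contractibility by exploiting that $\mathcal{P}(V)$ is a full subcomplex of the contractible complex $\mathcal{D}(V)$ whose vertex links are themselves contractible, carrying out a link analysis inside $\mathcal{P}(V)$. When $p \not\equiv \pm 1 \pmod q$ this reduction stalls, and I would instead extract an invariant—a residue class determined by $\partial D$ (the value of the primitivity parameter modulo the appropriate divisor)—that is constant along edges yet realizes infinitely many values, giving infinitely many components, all mutually isomorphic by transitivity of the automorphism action. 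The main obstacle I anticipate is the counting in the middle two steps: pinning down the exact number of common dual disks and of primitive triples demands a faithful encoding of disks on $\Sigma$ and a careful solution count of the primitivity congruence in each arithmetic regime, most delicately in the fragile small-$p$ exceptions.
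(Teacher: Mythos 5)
First, note that the paper does not actually prove this lemma: it is imported verbatim as Theorem 4.5 of \cite{CK16} (supplemented by \cite{Cho13} and \cite{CK17}), so there is no in-paper argument to compare yours against. Judged on its own terms, your outline points in the same general direction as the cited proofs --- fix a reference primitive disk, control the primitive disks disjoint from it, count common dual disks to determine edge types, and obtain contractibility from an intersection-reducing process --- but it is a plan rather than a proof, and the places where you locate the difficulty are exactly where a key idea is missing rather than merely deferred.

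Concretely: (i) primitivity of a disk $E \subset V$ is not ``a single linear/congruence condition modulo $p$''; it is the condition that $W \cup N(E)$ be a solid torus, equivalently that $\partial E$ represent a primitive element of the free group $\pi_1(W) \cong F_2$, and the arithmetic of $(p,q)$ enters only through the classification of which words of the form dictated by the gluing are primitive (or ``semi-primitive'') in $F_2$. Without that algebraic characterization the counting of common dual disks and of primitive triples --- which is the entire content of the case analysis --- cannot be carried out, and the exceptional regimes $q=2$, $p=2q+1$, $p\in\{2,3,5\}$ cannot be isolated. (ii) Your connectivity/contractibility step has a gap: reducing intersection with $E_0$ by outermost surgery only stays inside $\mathcal{P}(V)$ if the surgered disk is again primitive, which is the central and nontrivial lemma of \cite{Cho13} and \cite{CK16}; moreover, being a full subcomplex of the contractible complex $\mathcal{D}(V)$ does not by itself yield contractibility --- the references invoke a specific surgery-closure criterion for full subcomplexes of the disk complex. (iii) For $p \not\equiv \pm 1 \pmod q$ you would need to actually exhibit the claimed edge-invariant realizing infinitely many values; in the cited work disconnectedness is instead detected by showing that the corridor in $\mathcal{D}(V)$ joining suitable primitive vertices must pass through non-primitive vertices (the ``bridges'' of Section \ref{sec:The primitive tree}). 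None of these steps is implausible, but as written the proposal records where the theorem is hard without supplying the ideas that make it true.
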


Figure \ref{fig:shape} illustrates a small portion of the primitive disk complex $P(V)$ for each case.
The numbers $0$ or $1$ in the figure indicate the type of the edges.

\begin{center}
\labellist
 \pinlabel {(1) - (a), (b), (c)} [B] at 90 375
 \pinlabel {(2) - (a)} [B] at 400 380
 \pinlabel {(2) - (b)} [B] at 90 173
 \pinlabel {(2) - (c)} [B] at 400 173
 \pinlabel {(3) A portion of $\mathcal P(V)$ with bridges for the case of $p \not\equiv \pm 1 \pmod q$} [B] at 240 5

 \pinlabel {{\small $1$}} [B] at 341 515
 \pinlabel {{\small $1$}} [B] at 358 478
 \pinlabel {{\small $1$}} [B] at 377 512
 \pinlabel {{\small $1$}} [B] at 430 509
 \pinlabel {{\small $1$}} [B] at 464 511
 \pinlabel {{\small $1$}} [B] at 449 476
 \pinlabel {{\small $1$}} [B] at 403 490
 \pinlabel {{\small $1$}} [B] at 383 463
 \pinlabel {{\small $1$}} [B] at 420 464
 \pinlabel {{\small $1$}} [B] at 377 448
 \pinlabel {{\small $1$}} [B] at 358 416
 \pinlabel {{\small $1$}} [B] at 395 419
 \pinlabel {{\small $1$}} [B] at 410 421
 \pinlabel {{\small $1$}} [B] at 427 452
 \pinlabel {{\small $1$}} [B] at 446 424

 \pinlabel {{\small $1$}} [B] at 33 331
 \pinlabel {{\small $1$}} [B] at 72 313
 \pinlabel {{\small $1$}} [B] at 107 308
 \pinlabel {{\small $1$}} [B] at 141 312
 \pinlabel {{\small $1$}} [B] at 38 275
 \pinlabel {{\small $1$}} [B] at 83 275
 \pinlabel {{\small $1$}} [B] at 126 275
 \pinlabel {{\small $1$}} [B] at 170 275
 \pinlabel {{\small $1$}} [B] at 38 250
 \pinlabel {{\small $1$}} [B] at 77 261
 \pinlabel {{\small $1$}} [B] at 134 260
 \pinlabel {{\small $1$}} [B] at 175 251
 \pinlabel {{\small $1$}} [B] at 75 210
 \pinlabel {{\small $1$}} [B] at 107 217
 \pinlabel {{\small $1$}} [B] at 140 212
 \pinlabel {{\small $1$}} [B] at 177 201

 \pinlabel {{\small $0$}} [B] at 11 310
 \pinlabel {{\small $0$}} [B] at 34 308
 \pinlabel {{\small $0$}} [B] at 51 293
 \pinlabel {{\small $0$}} [B] at 71 292
 \pinlabel {{\small $0$}} [B] at 90 285
 \pinlabel {{\small $0$}} [B] at 112 289
 \pinlabel {{\small $0$}} [B] at 131 287
 \pinlabel {{\small $0$}} [B] at 150 296
 \pinlabel {{\small $0$}} [B] at 171 296
 \pinlabel {{\small $0$}} [B] at 30 220
 \pinlabel {{\small $0$}} [B] at 52 235
 \pinlabel {{\small $0$}} [B] at 71 236
 \pinlabel {{\small $0$}} [B] at 92 246
 \pinlabel {{\small $0$}} [B] at 109 242
 \pinlabel {{\small $0$}} [B] at 134 243
 \pinlabel {{\small $0$}} [B] at 149 231
 \pinlabel {{\small $0$}} [B] at 173 231
 \pinlabel {{\small $0$}} [B] at 191 220

 \pinlabel {{\small $0$}} [B] at 378 317
 \pinlabel {{\small $0$}} [B] at 358 283
 \pinlabel {{\small $0$}} [B] at 404 293
 \pinlabel {{\small $0$}} [B] at 452 294
 \pinlabel {{\small $0$}} [B] at 469 314
 \pinlabel {{\small $0$}} [B] at 423 269
 \pinlabel {{\small $0$}} [B] at 386 229
 \pinlabel {{\small $0$}} [B] at 417 228
 \pinlabel {{\small $0$}} [B] at 448 219
 \pinlabel {{\small $0$}} [B] at 442 185

 \pinlabel {{\small $1$}} [B] at 345 320
 \pinlabel {{\small $1$}} [B] at 396 311
 \pinlabel {{\small $1$}} [B] at 418 309
 \pinlabel {{\small $1$}} [B] at 437 314
 \pinlabel {{\small $1$}} [B] at 387 268
 \pinlabel {{\small $1$}} [B] at 370 263
 \pinlabel {{\small $1$}} [B] at 427 251
 \pinlabel {{\small $1$}} [B] at 361 227
 \pinlabel {{\small $1$}} [B] at 350 204
 \pinlabel {{\small $1$}} [B] at 399 196
 \pinlabel {{\small $1$}} [B] at 432 230
 \pinlabel {{\small $1$}} [B] at 476 202
 \endlabellist
\includegraphics[width=12cm]{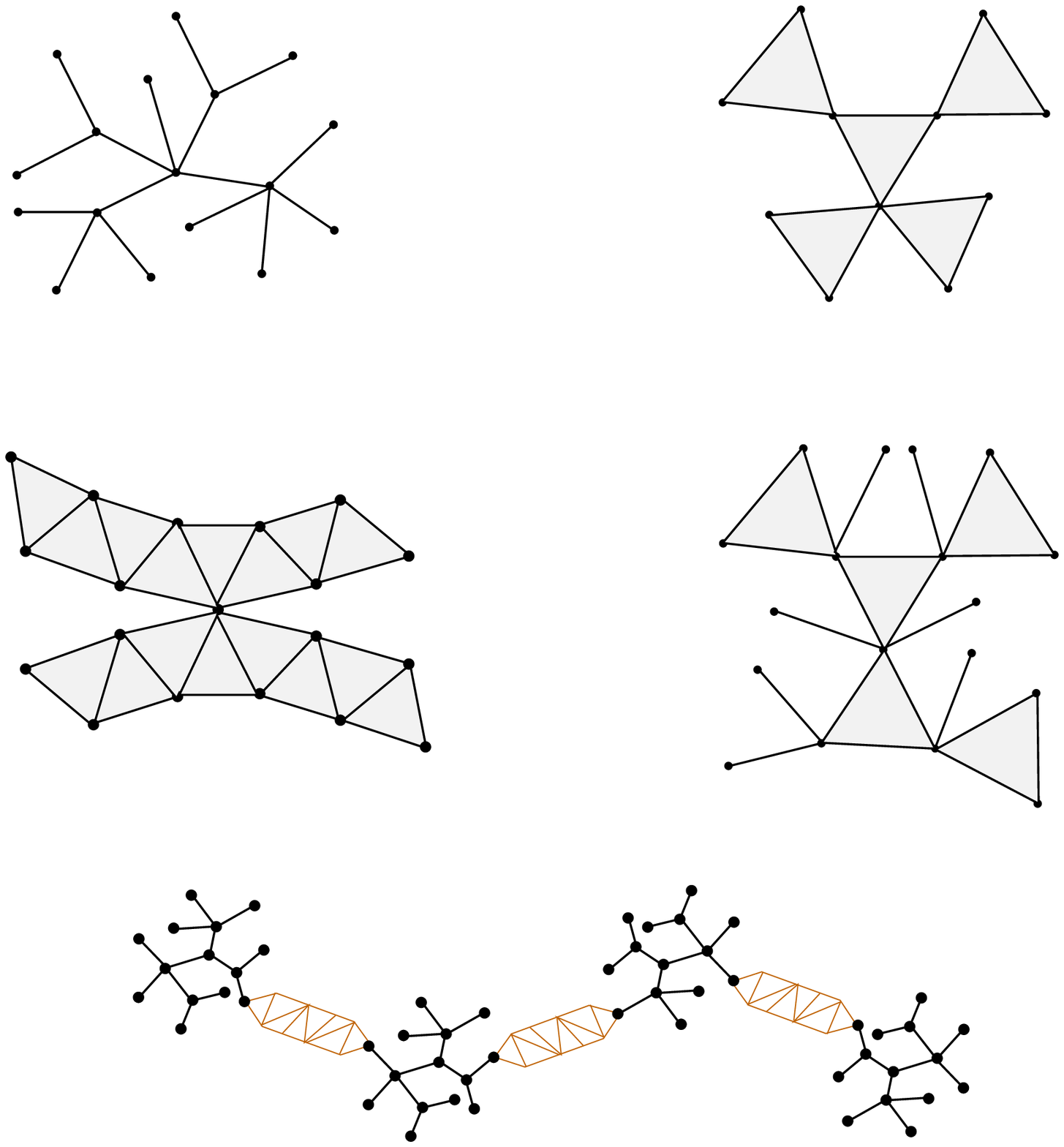}
\captionof{figure}{A portion of primitive disk complex $\mathcal P(V)$ for each lens space.}
\label{fig:shape}
\end{center}

In the case of $p \not\equiv \pm 1 \pmod{q}$, that is, $\mathcal P(V)$ consists of infinitely many tree components, we can define a special kind of corridor in $\mathcal D(V)$, call a {\it bridge}.
A bridge is a corridor connecting the vertices $D$ and $E$ of two primitive disks and contains no vertices of primitive disks except $D$ and $E$.
From the structure of $\mathcal D(V)$ (the dual complex of $\mathcal D(V)$ is a tree), any bridge connects exactly two tree components of $\mathcal P(V)$, and any two tree components of $\mathcal P(V)$ is connected by at most a single bridge.
Note that the union of $\mathcal P(V)$ and the bridges form a contractible subcomplex of $\mathcal D(V)$.
Figure \ref{fig:shape} (3) describes a small portion of four tree components of $\mathcal P(V)$ and three bridges connecting them consecutively.
Further, it was shown in \cite{CK17} that any two bridges are isomorphic to each other, and any two bridges are either disjoint from each other or intersect only in an end vertex (Lemma 2.14 in \cite{CK17}).

The primitive disk complex $\mathcal P(V)$ for a genus-$2$ Heegaard splitting $(V, W; \Sigma)$ of a lens space was used to obtain a finite presentation of the {\it genus-$2$ Goeritz group} of a lens space, denoted by $\mathcal G$.
Here the genus-$2$ Goeritz group $\mathcal G$ is the mapping class group of the splitting, that is, the group of the isotopy classes of orientation preserving diffeomorphisms of the lens space preserving the two handlebodies $V$ and $W$ setwise.
The group $\mathcal G$ acts on the complex $\mathcal P(V)$ simplicially, and the action was fully studied in \cite{Cho13}, \cite{CK16} and \cite{CK17}, and a finite presentation of $\mathcal G$ of each lens space was obtained.
The followings are somewhat technical results on the action of $\mathcal G$ on $\mathcal P(V)$ already developed in the previous works, which we will need in the next section.

\begin{lemma}
Let $L = L(p, q)$ be a lens space with $1 \leq q \leq p/2$, and let $(V, W; \Sigma)$ be a genus-$2$ Heegaard splitting of $L$. Let $\phi$ be an order-$2$ element of the Goeritz group $\mathcal G$, and let $\overline{\phi}$ be the simplicial automorphism of $\mathcal P(V)$ and of $\mathcal P(W)$ defined by $\phi$.
\begin{enumerate}
\item If $\overline{\phi}$ exchanges the end vertices of an edge of type-$2$ of $\mathcal P(V)$, then $\overline{\phi}$ preserves each of the two vertices of $\mathcal P(W)$ of the two common dual disks of the primitive pair representing the end vertices.
\item If $\overline{\phi}$ exchanges the end vertices of an edge of type-$0$ or a bridge, then $\phi$ induces the isomorphism $\phi_*$ of the first homology group $H_1(L)$ such that $\phi_* (x) = qx$.
\end{enumerate}
\label{lem:action}
\end{lemma}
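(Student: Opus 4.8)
The plan is to treat both parts through the action of $\phi$ on homology, using two complementary invariants: for part (1) the algebraic intersection pairing on $\Sigma$, and for part (2) the generator of $H_1(L)$ carried by a primitive disk. The common starting point is that $\phi$ is an orientation-preserving diffeomorphism of $L$ fixing $V$ setwise, hence $\Sigma=\partial V$ with its induced orientation; therefore the induced automorphism $\phi_*$ of $H_1(\Sigma;\mathbb{Z})\cong\mathbb{Z}^4$ preserves the intersection pairing $\cdot$ and the Lagrangians $L_V=\Ker(H_1(\Sigma)\to H_1(V))$ and $L_W=\Ker(H_1(\Sigma)\to H_1(W))$, and descends to $\phi_*$ on $H_1(L)=H_1(\Sigma)/(L_V+L_W)$. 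For a vertex $X$ of $\mathcal P(V)$ or $\mathcal P(W)$, $\overline\phi(X)=Y$ means $\phi(X)=Y$ up to isotopy, so $\phi_*(\partial X)=\pm\partial Y$; and since $\phi^2=\id$, whenever $\overline\phi$ exchanges $X,Y$ one has $\phi_*(\partial X)=s\,\partial Y$ and $\phi_*(\partial Y)=s\,\partial X$ for a common sign $s\in\{\pm1\}$.

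\textbf{Part (1).} Since $\overline\phi$ exchanges $D$ and $E$ and each common dual disk is dual to both, $\overline\phi$ permutes the two common duals $\{D',E'\}$; I will rule out the transposition. Orient the curves so that $\partial D\cdot\partial D'=1=\partial E\cdot\partial E'$, and set $\partial D\cdot\partial E'=\epsilon_1$, $\partial E\cdot\partial D'=\epsilon_2$ with $\epsilon_i\in\{\pm1\}$ (each common dual meets each disk once); note $\partial D\cdot\partial E=0=\partial D'\cdot\partial E'$. As $\{D,E\}$ and $\{D',E'\}$ are complete meridian systems generating $L_V$ and $L_W$, the Pfaffian of the Gram matrix of $\cdot$ in the basis $\{\partial D,\partial E,\partial D',\partial E'\}$ equals $\epsilon_1\epsilon_2-1$ in absolute value the index $[H_1(\Sigma):L_V+L_W]=|H_1(L)|=p$; with $\epsilon_i=\pm1$ this forces $p=2$ (recovering $L=L(2,1)$, in accordance with Lemma \ref{lem:structure}(1)(a)) and $\epsilon_1\epsilon_2=-1$. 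Now suppose $\overline\phi$ swapped $D'$ and $E'$, so that $\phi_*(\partial D')=t\,\partial E'$, $\phi_*(\partial E')=t\,\partial D'$ alongside $\phi_*(\partial D)=s\,\partial E$, $\phi_*(\partial E)=s\,\partial D$. Invariance of $\cdot$ applied to $\partial D\cdot\partial D'=1$ gives $st=1$, while applied to $\partial D\cdot\partial E'=\epsilon_1$ it gives $st\,\epsilon_2=\epsilon_1$, i.e.\ $st=\epsilon_1\epsilon_2=-1$. This contradiction shows $\overline\phi$ fixes each of $D'$ and $E'$.

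\textbf{Part (2).} Fix a dual disk $D'$ of $D$. Since $D'$ is primitive, $T_D:=V\cup N(D')$ and $\overline{W-N(D')}$ are solid tori, so $L=T_D\cup\overline{W-N(D')}$ is a genus-one Heegaard splitting and the core $\gamma_D$ of $T_D$ generates $H_1(L)\cong\mathbb{Z}/p$. Put $E:=\overline\phi(D)$ and $E':=\phi(D')$; then $E'$ is a dual disk of $E$, and since $\phi(V)=V$ we have $T_E=V\cup N(E')=\phi(T_D)$, whence $\gamma_E=\phi(\gamma_D)$ and $\phi_*[\gamma_D]=[\gamma_E]$. It thus suffices to prove $[\gamma_E]=q[\gamma_D]$. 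The contrast with the central cases guides the computation: if $\{D,E\}$ admitted a common dual $F'$ (type-$1$ or type-$2$) one could take $D'=E'=F'$, so $T_D=T_E$ and $[\gamma_E]=\pm[\gamma_D]$; the defining feature of a type-$0$ edge and of a bridge is precisely the absence of a common dual, and the resulting passage between the two genus-one reductions multiplies the core generator exactly by the gluing parameter $q$. I would establish this last equality by explicit homological bookkeeping for primitive disks and their duals in the standard diagrams of \cite{Cho13}, \cite{CK16}, \cite{CK17}, computing $[\gamma_D]$ and $[\gamma_E]$ directly; equivalently, one identifies $\phi$ up to conjugacy in $\mathcal{G}$ with the standard exchange involution and uses that conjugate automorphisms of the abelian group $H_1(L)$ coincide. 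Since $[\gamma_D]$ generates, this yields $\phi_*(x)=qx$ for all $x$.

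The main obstacle is this factor-$q$ identity in part (2). Unlike part (1), it cannot be settled by the intersection form on $\Sigma$ alone, which determines $\phi_*$ only modulo $L_V+L_W$; one must genuinely track the generator of $H_1(L)$ through the change of genus-one reduction prescribed by the combinatorial type of the edge, and this is where the explicit models of the previous works enter. Two consistency checks the argument must respect: because $\phi^2=\id$, $\phi_*$ has order dividing two, so in the situation of part (2) necessarily $q^2\equiv1\pmod p$ (hence $q\equiv q^{-1}$ and the conclusion is unambiguous), while in part (1) the ambient space is forced to be $L(2,1)$, where $\phi_*$ is automatically trivial.
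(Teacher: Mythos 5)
Your part (1) is correct but takes a genuinely different route from the paper. The paper quotes Lemma 5.2 (1) of \cite{Cho13}: the stabilizer of the unordered pair $\{D, E\}$ in $\mathcal G$ is an explicit dihedral group of order $8$, and one reads off that the only order-two elements exchanging $D$ and $E$ are $\rho\gamma$ and $\rho^3\gamma$, both of which fix $D'$ and $E'$. You instead use only the intersection form on $H_1(\Sigma)$: the Pfaffian identity $|\epsilon_1\epsilon_2-1|=p$ correctly pins down $p=2$ and $\epsilon_1\epsilon_2=-1$, and the incompatibility of $st=1$ (from preserving $\partial D\cdot\partial D'=1$) with $st=\epsilon_1\epsilon_2=-1$ (from preserving $\partial D\cdot\partial E'$) rules out the transposition of $\{D',E'\}$. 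This is more self-contained, since it avoids the full stabilizer computation; the one fact you should state explicitly is that a primitive pair is a complete meridian system (two disjoint, non-isotopic, non-separating disks in a genus-$2$ handlebody cut it into a ball), so that $\{\partial D,\partial E\}$ and $\{\partial D',\partial E'\}$ really are bases of $L_V$ and $L_W$ and the index of $L_V+L_W$ is $p$.

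For part (2) your strategy coincides with the paper's: attach to $D$ and to $E=\overline{\phi}(D)$ genus-one Heegaard splittings of $L$, note that $\phi$ carries one core generator to the other, and reduce everything to the identity $[\gamma_E]=q[\gamma_D]$ in $H_1(L)$. The paper settles exactly this identity by citing the proofs of Lemma 3.13 of \cite{CK16} (type-$0$ edges) and Lemma 2.13 of \cite{CK17} (bridges), where $[l_D]=q[l_E]$ is computed in explicit models, together with the fact (Lemma 5.3 (3) of \cite{CK16}, Lemma 4.7 (2) of \cite{CK17}) that the existence of an exchanging element already forces $q^2\equiv 1\pmod p$, removing the $q$-versus-$q'$ ambiguity. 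You correctly isolate this as the crux and correctly observe that the intersection form on $\Sigma$ cannot detect it, but you leave it as ``explicit homological bookkeeping in the standard diagrams''; your fallback (conjugate $\phi$ to a standard exchange involution and use that $\Aut(\mathbb{Z}/p)$ is abelian) still requires computing that standard involution's action in a model, so it does not bypass the computation. As written, part (2) is therefore at the same level of completeness as the paper's own proof --- a reduction plus an appeal to the prior works --- rather than an independent argument; to make it stand alone you would need to carry out the $[\gamma_E]=q[\gamma_D]$ calculation.
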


We remark that we have the edge of type-$2$ only when $p = 2$, the edge of type-$0$ only when $q > 2$, and the bridge only when $p \not\equiv \pm 1 \pmod q$.

\begin{proof}
\noindent (1) This follows directly from Lemma 5.2 (1) in \cite{Cho13}, which states that the subgroup $\mathcal G_{\{D \cup E\}}$ of $\mathcal G$ preserving the union of the end vertices $D$ and $E$ of an edge of type-$2$ is the dihedral group $\langle ~\rho, \gamma ~|~ \rho^4 = \gamma^2 = (\rho \gamma )^2 = 1 ~ \rangle$ of order $8$.
The element $\rho$ and $\gamma$ are described in Figure 8 in \cite{Cho13}.
In the figure, $\gamma$ preserves each of the vertices $D$ and $E$ but exchanges the two vertices $D'$ and $E'$ of the two common dual disks of the pair $\{D, E\}$, while $\rho$ exchanges both of $D$, $E$ and $D'$, $E'$.
Thus, an order-$2$ element $\overline{\phi}$ exchanging $D$ and $E$ is either $\rho \gamma$ or $\rho^3\gamma$, and we see that both are preserving each of $D'$ and $E'$.

\noindent (2) The key argument was already provided in the proofs of Lemma 3.13 in \cite{CK16} (for the edge of type-$0$) and Lemma 2.13 in \cite{CK17} (for the bridge).
We sketch the argument briefly.
We first have that there exists an element of $\mathcal G$ exchanging the end vertices of an edge of type-$0$ or a bridge if and only if $q^2 \equiv 1 \pmod p$, from Lemma 5.3 (3) in \cite{CK16} (for the edge of type-$0$) and from Lemma 4.7 (2) in \cite{CK17} (for the bridge).
(In the case of the bridge, for any two tree components $\mathcal T_1$ and $\mathcal T_2$ connected by a bridge, it was shown that there is no element of $\mathcal G$ exchanging $\mathcal T_1$ and $\mathcal T_2$ if $q^2 \not\equiv 1 \pmod p$.)
Thus, with $q^2 \equiv 1 \pmod p$, we can replace $q'$ by $q$, and $\overline{q}$ by $q$ in the proofs of Lemma 3.13 in \cite{CK16} and Lemma 2.13 in \cite{CK17} respectively, where $q'$ is the unique integer satisfying $qq' \equiv \pm 1 \pmod p$ and $\overline{q}$ is one of $q$ and $q'$.

Let $D$ and $E$ be the primitive disks in $V$ representing the vertices of the edge of type-$0$ or a bridge of $\mathcal P(V)$.
Then $V_D = \overline{V - N(D)}$ and $V_E = \overline{V - N(E)}$ are solid tori, that form genus-$1$ Heegaard splittings of $L$ with their exteriors $W_D = \overline{L - V_D}$ and $W_E = \overline{L - V_E}$ respectively.
In the proof of Lemma 3.13 in \cite{CK16}, the core circles of $V_D$ and $V_E$, denoted by $l_D$ and $l_E$, represent the generators $[l_D]$ and $[l_E]$ of $H_1(V_D)$ and $H_1(V_E)$ respectively.
In the proof of Lemma 2.13 in \cite{CK17}, the core circles of $W_D$ and $W_E$, denoted by $l_D$ and $l_E$ again, represent the generators $[l_D]$ and $[l_E]$ of $H_1(W_D)$ and $H_1(W_E)$ respectively.
In either cases, it was shown that $[l_D]$ and $[l_E]$ satisfy $[l_D] = q[l_E]$ in $H_1(L)$ after a suitable choice of orientation of $l_D$ and $l_E$.
The elements $\phi$ sends $E$ to $D$ and hence the isotopy class of $l_E$ to that of $l_D$.
That is,  $\phi$ induces $\phi_*$ such that $\phi_* (1) = q$.
\end{proof}

Now we are ready to construct the primitive tree for each lens space.
Given a lens space $L(p, q)$, $1 \leq q \leq p/2$, with a genus-$2$ Heegaard splitting $(V, W; \Sigma)$, the {\it primitive tree} $\mathcal{PT}(V)$ is defined as follows.

\begin{enumerate}
\item If $p \equiv \pm 1 \pmod q$, and $q \neq 2$ and $p \neq 2q + 1$, then $\mathcal P(V)$ is already a tree. So we just define $\mathcal{PT}(V)$ to be $\mathcal P(V)$. We observe that there are essentially three different kind of trees from Lemma \ref{lem:structure} (1):
    \begin{enumerate}
    \item every edge is of type-$2$ if $p = 2$,
    \item every edge is of type-$1$ if $q = 1$, and
    \item every edge is of either type-$0$ or type-$1$ otherwise.
    \end{enumerate}
\item If $p \equiv \pm 1 \pmod q$, and $q = 2$ or $p = 2q + 1$, then $\mathcal P(V)$ is $2$-dimensional. We have the three cases as stated in Lemma \ref{lem:structure} (2).
    \begin{enumerate}
    \item If $p = 3$, then the primitive disk complex $\mathcal{P}(V)$ deformation retracts to a tree as shown in Figure \ref{fig:shape_2} (2)-(a). We define $\mathcal{PT}(V)$ to be the resulting tree. In the figure, the black vertices are the vertices of $\mathcal{P}(V)$ while the white ones are the barycenters of 2-simplexes of $\mathcal{P}(V)$.
    \item If $p = 5$, then we remove every type-$1$ edge of $\mathcal P(V)$ to get a tree. That is, $\mathcal{PT}(V)$ is the subcomplex of $\mathcal P(V)$ containing only the edges of type-$0$. See Figure \ref{fig:shape_2} (2)-(b).
    \item If $p \geq 7$, then we remove every type-$1$ edge ``contained in a $2$-simplex'' of $\mathcal P(V)$ to get a tree. Then $\mathcal{PT}(V)$ is the subcomplex of $\mathcal P(V)$ containing only the edges of type-$0$, and the edges of type-$1$ that was not contained in any $2$-simplex of $\mathcal P(V)$. See Figure \ref{fig:shape_2} (2)-(c).
    \end{enumerate}
\item If $p \not\equiv \pm 1 \pmod q$, the primitive disk complex $\mathcal{P}(V)$ consists of infinitely many tree components. If there exists a bridge connecting the vertices $D$ and $E$ of $\mathcal P(V)$, we replace the bridge by a new edge connecting $D$ and $E$. Then we define $\mathcal{PT}(V)$ to be the union of $\mathcal P(V)$ and all the new edges came from the bridges. We call such a new edge just a {\it bridge} again. See Figure \ref{fig:shape_2} (3).
\end{enumerate}

We remark that every vertex of the primitive tree $\mathcal{PT}(V)$ is that of the primitive complex $\mathcal P(V)$, the isotopy class of a primitive disk in $V$ except the case of $L(3, 1)$.
For $L(3, 1)$, the tree is bipartite and we have two kind of vertices; the black, the vertex of $\mathcal P(V)$, has infinite valency while the white, the barycenter of a $2$-simplex of $\mathcal P(V)$, has valency three.

\begin{center}
\labellist
 \pinlabel {(2) - (a)} [B] at 95 220
 \pinlabel {(2) - (b)} [B] at 375 220
 \pinlabel {(2) - (c)} [B] at 95 20
 \pinlabel {(3)} [B] at 375 20
 \endlabellist
\includegraphics[width=13cm]{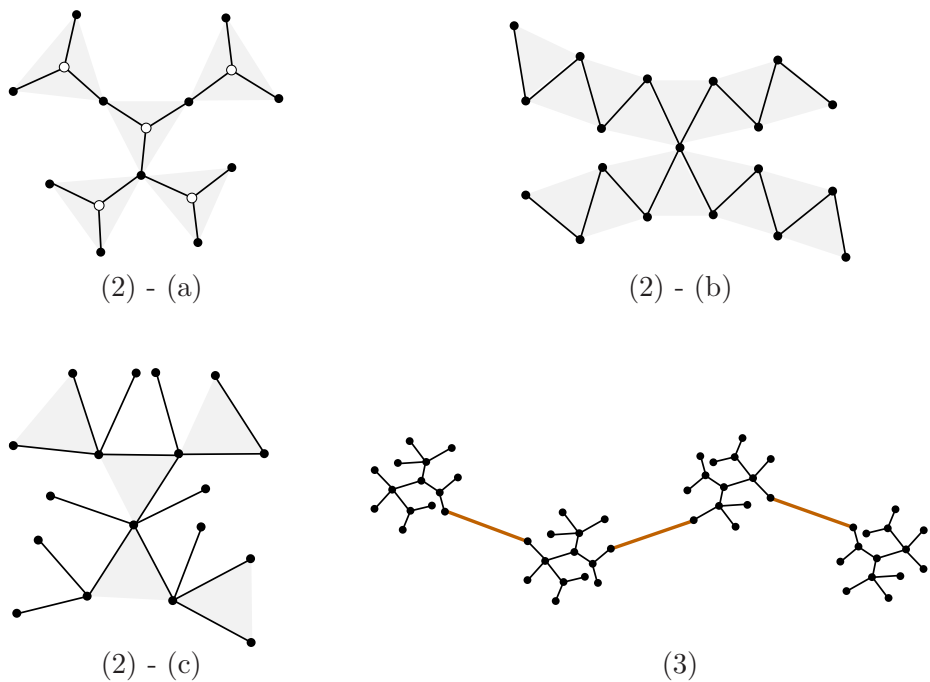}
\captionof{figure}{A portion of primitive tree $\mathcal{PT}(V)$ for each lens space in the cases (2) and (3).}
\label{fig:shape_2}
\end{center}

\section{Proof of the main theorem}
\label{sec:(1, 1)-positions of 2-bridge knots}

In this section, we prove Theorem \ref{thm:main_theorem} using our previous results.
Throughout the section, all the symbols for disks and diffeomorphisms denotes only themselves, not their isotopy classes.
Let $K$ be a knot in $(1, 1)$-position with respect to the Heegaard splitting $(V_0, W_0; \Sigma_0)$ of $S^3$.
That is, each of $V_0$ and $W_0$ is a solid torus that $K$ intersects in a properly embedded trivial arc.
Let $L$ be the 2-fold cover of $S^3$ branched along $K$.
It is well-known that $L$ is a lens space $L(p,q)$ and the preimages of $V_0$ and $W_0$ are genus-$2$ handlebodies $V$ and $W$ that form a genus-$2$ Heegaard splitting of $L$.
Each of $V$ and $W$ is invariant under the covering involution $\phi$ of $L$ over $S^3$, as is the preimage $\widehat{K}$ of $K$.
Thus the isotopy class of $\phi$ is an element of the genus-$2$ Goeritz group $\mathcal G$ of $L$.
We note that $\phi$ induces a homomorphism $\phi_*$ on the first homology group $H_1(L)$ as $\phi_*(x) = -x$.

\begin{lemma}
\label{lem:equivariant cut-and-paste}
Let $V$ and $W$ be the genus-$2$ handlebodies and let $\phi$ be the involution defined as in the above.
Let $D$ be a disk properly embedded in $V$ or in $W$, say in $V$, such that $\partial D = \partial \phi(D)$ or $\partial D$ intersects $\partial \phi(D)$ transversely.
Let $\ell$ be a simple closed curve on $\partial V$ such that $\phi(\ell) = \ell$ and $\ell$ intersects $\partial D$ transversely in a single point.
\begin{enumerate}
\item
There exists a disk $E$ properly embedded in $V$ such that $\partial E = \partial D$ and $E \cap \phi (E)$ contains no loops in the interior.
\item
There exists a disk $F$ properly embedded in $V$ such that $F$ is disjoint from $\phi(F)$, and $\partial F$ intersects $\ell$ in a single point.
\end{enumerate}
\end{lemma}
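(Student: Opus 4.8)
The plan is to prove both statements by equivariant cut-and-paste, reducing the intersection $D\cap\phi(D)$ (respectively $E\cap\phi(E)$) by surgeries compatible with the involution. The starting observation is that, since $\phi^2=\id$, the intersection is $\phi$-invariant: $\phi(D\cap\phi(D))=\phi(D)\cap D$. In general position it is a $1$-manifold consisting of loops in the interior together with arcs whose endpoints lie on $\partial D\cap\partial\phi(D)$. Since $\Fix(\phi)=\widehat K$ is $1$-dimensional, $D$ meets it in only finitely many points, which will let us put everything in equivariant general position.

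For (1), I would induct on the number of interior loops. If there is such a loop, choose one, say $c$, that is innermost on $\phi(D)$, so that it bounds a disk $\delta\subset\phi(D)$ whose interior is disjoint from $D$. The key point is that then $\phi(\delta)\subset\phi(\phi(D))=D$, and since $\operatorname{int}\delta$ misses $D$ it in particular misses $\phi(\delta)$; thus $\delta$ and $\phi(\delta)$ meet only along their boundaries. Let $\delta_D\subset D$ be the subdisk bounded by $c$, and replace $\delta_D$ by a push-off $\delta^{+}$ of $\delta$ taken off $\phi(D)$ to obtain $E=(D\setminus\delta_D)\cup\delta^{+}$. Because $V$ is irreducible, the sphere $\delta\cup\delta_D$ bounds a ball, so $E$ is isotopic to $D$ rel $\partial D$ and $\partial E=\partial D$. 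Computing the four pieces of $E\cap\phi(E)$, the push-offs contribute nothing ($\delta^{+}$ misses $\phi(D)$ and $\phi(\delta^{+})$ misses $D$), while deleting $\delta_D$ from $D$ and $\phi(\delta_D)$ from $\phi(D)$ removes at least the loop $c$; hence $E\cap\phi(E)$ has strictly fewer interior loops. Iterating yields the desired $E$.

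The step I expect to be the main obstacle is verifying that the surgery does not re-create intersections through the images of the push-offs, i.e.\ that $\delta^{+}$ can be chosen with $\delta^{+}\cap\phi(\delta^{+})=\emptyset$. This is exactly where the equivariance is used: the identity $\operatorname{int}\delta\cap\phi(\delta)=\emptyset$ established above guarantees that any residual intersection can occur only near $\partial\delta=c$, and since $\phi$ is an orientation-preserving involution interchanging the two local sheets $D$ and $\phi(D)$, the push-off side can be chosen consistently with $\phi$ so that $\delta^{+}$ and $\phi(\delta^{+})$ lie on opposite sides near $c$ and remain disjoint. The finitely many points of $D\cap\Fix(\phi)$ and the possibility $c=\phi(c)$ are handled by the same local analysis.

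For (2), I would start from the disk $E$ produced in (1), so that $E\cap\phi(E)$ consists only of arcs, again $\phi$-invariant, and remove these by equivariant outermost-arc surgery. Choose an arc $a$ outermost on $\phi(E)$, cutting off a disk $\delta\subset\phi(E)$ with $\partial\delta=a\cup b$, $b\subset\partial\phi(E)=\partial\phi(D)$, and interior disjoint from $E$; as before $\operatorname{int}\delta\cap\phi(\delta)=\emptyset$. The arc $a\subset E$ cuts $E$ into two disks, and capping each with a push-off of $\delta$ produces two candidate disks $F_1,F_2$ with boundaries $\gamma_1,\gamma_2$ satisfying $[\gamma_1]+[\gamma_2]=[\partial E]$ in $H_1(\partial V)$. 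To keep the single intersection with $\ell$ I would use the hypotheses: since $\phi(\ell)=\ell$ and $\ell$ meets $\partial D$ transversely in the one point $x$, it meets $\partial\phi(D)$ transversely in the single point $\phi(x)$. Hence $\ell\cdot\gamma_1+\ell\cdot\gamma_2=\ell\cdot\partial E=\pm1$, while each $\gamma_i$ meets $\ell$ in at most two points, the only candidates being $x$ and $\phi(x)$. Choosing $F$ to be the $F_i$ with odd algebraic intersection forces $\ell$ to meet $\partial F$ in exactly one point. The same push-off computation as in (1) shows this surgery strictly decreases the number of arcs of intersection with the image while introducing no loops, so iterating gives a disk $F$ with $F\cap\phi(F)=\emptyset$ (once no interior arcs or loops remain, the boundaries are forced apart as well) and $\partial F$ meeting $\ell$ once. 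The edge case $\partial D=\partial\phi(D)$, equivalently a $\phi$-invariant boundary or $x=\phi(x)$, is disposed of by a final small equivariant push-off after putting $D$ and $\ell$ in general position with respect to $\Fix(\phi)$.
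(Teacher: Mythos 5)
Your proposal is correct and follows essentially the same route as the paper: innermost-loop equivariant surgery for (1) and outermost-arc equivariant surgery for (2), with the count of components of $D \cap \phi(D)$ strictly decreasing at each step. The only differences are cosmetic — you justify the choice of push-off side for $\delta^{+}$ versus $\phi(\delta^{+})$ explicitly (the paper subsumes this into "repositioning by a small isotopy"), and in (2) you select the surgered disk by a parity count of $\ell\cdot\gamma_i$, whereas the paper simply chooses the outermost arc and the subdisk of $D$ so as to avoid $\phi(z)$ and $z$ respectively.
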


\begin{proof}
(1) Suppose that $D \cap \phi (D)$ contains loops in the interior.
Let $\gamma$ be an innermost loop of $D \cap \phi (D)$ on $\phi (D)$.
That is, $\gamma$ cuts off a subdisk $D_1$ from $\phi(D)$ such that $D_1 \cap D = \gamma$.
The loop $\gamma$ also bounds a disk $D_0$ on $D$.
Replacing $D$ with $\overline{D \setminus D_0} \cup D_1$ and repositioning by a small isotopy produces a new disk $D_*$ such that $\partial D_* = \partial D$ and $D_* \cap \phi (D_*)$ contains fewer loops than $D \cap \phi (D)$ had.
Repeating the process finitely many times, we finally get a disk $E$ satisfying the required condition.

\noindent (2) Suppose that $\ell$ intersects $\partial D$ in a single point, say $z$. Then $\ell$ intersects $\partial \phi(D)$ only in $\phi(z)$.
By (1), we may assume that $D \cap \phi(D)$ contains no loops in the interior.
Let $\beta$ be an outermost arc of $D \cap \phi(D)$ on $\phi(D)$.
That is, $\beta$ cuts off a subdisk $D_1'$ from $\phi(D)$ such that $D_1' \cap D = \beta$.
Further, we may choose $\beta$ so that $\partial D_1$ does not contain the point $\phi(z)$.
The arc $\beta$ also cuts off a disk $D_0'$ from $D$ that does not contain $z$.
Replacing $D$ with $\overline{D \setminus D'_0} \cup D'_1$ and repositioning by a small isotopy produces a new dual disk $D'_*$ such that $D'_* \cap \phi(D'_*)$ has fewer arcs than $D \cap \phi (D)$ had.
Repeating the process finitely many times,  we finally get a disk $F$ satisfying the required condition.
\end{proof}

\begin{lemma}
\label{lem:key lemma}
Let $V$ and $W$ be the genus-$2$ handlebodies and let $\phi$ be the involution defined as in the above.
After possibly exchanging the names of $V$ and $W$, there exist a primitive disk $J$ in $V$ and its dual disk $R$ in $W$ such that $\phi (J) = J$, and $R$ is disjoint from $\phi(R)$.
\end{lemma}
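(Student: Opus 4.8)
The plan is to split the proof into two independent parts. The first and main part is to produce a primitive disk, lying in $V$ or in $W$, that is invariant under $\phi$; the second part is to upgrade one of its dual disks to a dual disk disjoint from its own $\phi$-image. The second part will be essentially immediate from Lemma \ref{lem:equivariant cut-and-paste} (2), so the substance of the argument lies in the first part, which I would carry out by studying the simplicial involution $\overline{\phi}$ induced on the primitive tree.

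First I would observe that, since $\phi$ is an order-$2$ element of $\mathcal G$, it induces a simplicial automorphism $\overline{\phi}$ of order at most two on the tree $\mathcal{PT}(V)$ constructed in Section \ref{sec:The primitive tree}. A finite-order simplicial automorphism of a tree fixes a point of its geometric realization, and for an involution that fixed point is either a vertex or the midpoint of an edge whose two end vertices are interchanged. If $\overline{\phi}$ fixes a vertex represented by a primitive disk, that disk is already a $\phi$-invariant primitive disk $J$ in $V$. The only other way $\overline{\phi}$ can fix a vertex is in the case of $L(3,1)$, where it might fix a white barycenter of a $2$-simplex $\{D,E,F\}$; then it permutes the three black neighbors $D$, $E$, $F$ by an involution of a three-element set, which necessarily fixes one of them, again yielding a $\phi$-invariant primitive disk.

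The hard part will be the remaining case, in which $\overline{\phi}$ inverts an edge of $\mathcal{PT}(V)$, interchanging the end vertices $D$ and $E$ of a primitive pair. Here I would use the homological constraint $\phi_*(x) = -x$ on $H_1(L) = \Integer/p$ to control the type of the inverted edge. By Lemma \ref{lem:action} (2), if the edge were of type-$0$ or a bridge, then $\phi_*(x) = qx$, forcing $q \equiv -1 \pmod p$; this is incompatible with $1 \leq q \leq p/2$ precisely in the regimes ($q > 2$, or $p \not\equiv \pm 1 \pmod q$) where type-$0$ edges or bridges exist, so this possibility is excluded. Thus the inverted edge must be of type-$1$ or type-$2$. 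For a type-$1$ edge, the unique common dual disk of $\{D,E\}$ is carried to a common dual disk of $\{\phi(D),\phi(E)\}=\{E,D\}$, hence to itself by uniqueness, and so is a $\phi$-invariant primitive disk in $W$. For a type-$2$ edge (which occurs only when $p=2$), Lemma \ref{lem:action} (1) gives directly that $\phi$ preserves each of the two common dual disks, again producing a $\phi$-invariant primitive disk in $W$. In either subcase I obtain the desired disk in $W$, and after exchanging the names of $V$ and $W$ I am reduced to the situation of the previous paragraph. The main obstacle is exactly this step: playing the constraint $\phi_* = -\mathrm{id}$ against Lemma \ref{lem:action} to rule out type-$0$ edges and bridges and to extract the invariant common dual disk.

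Finally, with a primitive disk $J$ in $V$ satisfying $\phi(J)=J$ in hand, I would complete the proof as follows. Choose any dual disk $R_0$ of $J$ in $W$, so $\partial R_0$ meets $\partial J$ transversely in one point, and after a small isotopy arrange that $\partial R_0$ and $\partial \phi(R_0)$ either coincide or meet transversely. The curve $\ell = \partial J$ lies on $\Sigma = \partial W$, satisfies $\phi(\ell)=\ell$ since $\phi(J)=J$, and meets $\partial R_0$ in a single point. Applying Lemma \ref{lem:equivariant cut-and-paste} (2) to the disk $R_0$ in $W$ and the curve $\ell$ then produces a disk $R$ in $W$ that is disjoint from $\phi(R)$ and whose boundary still meets $\partial J$ in a single point; hence $R$ is a dual disk of $J$ disjoint from $\phi(R)$, as required.
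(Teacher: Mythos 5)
Your overall route is the same as the paper's: find a fixed point of $\overline{\phi}$ on $\mathcal{PT}(V)$, handle the $L(3,1)$ barycenter subcase by noting an involution of three vertices fixes one, rule out inverted type-$0$ edges and bridges by playing $\phi_*=-\mathrm{id}$ against Lemma \ref{lem:action} (2), pass to $W$ via the common dual disk for type-$1$ and type-$2$ edges using Lemma \ref{lem:action} (1), and finish with Lemma \ref{lem:equivariant cut-and-paste} (2). All of that matches the paper and is correct.

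There is, however, one genuine gap. In the vertex case you assert that if $\overline{\phi}$ fixes the vertex represented by a primitive disk $E$, then ``that disk is already a $\phi$-invariant primitive disk $J$.'' A fixed vertex of $\mathcal{P}(V)$ only says that $\phi(E)$ is \emph{isotopic} to $E$; the lemma (and its later use, where $J$ must descend to the branched quotient) requires $\phi(J)=J$ as a set, and the section explicitly works with actual disks rather than isotopy classes. Passing from invariance of the isotopy class to an honestly invariant representative is a nontrivial step: the paper first replaces $\partial E$ by a $\phi$-invariant curve (choosing a $\phi$-invariant hyperbolic metric on $\partial V$ and taking the unique geodesic representative, which must be preserved by $\phi$), and only then applies the equivariant surgery of Lemma \ref{lem:equivariant cut-and-paste} (1) to produce an invariant disk $J$ with that boundary. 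The same remark applies to your type-$1$/type-$2$ subcases, where you call the common dual disk ``$\phi$-invariant'': what you actually get there is again only a fixed vertex of $\mathcal{P}(W)$, which is fine since you then reduce to the vertex case — but that reduction lands exactly on the missing step. Your final paragraph (upgrading a dual disk to one disjoint from its $\phi$-image via Lemma \ref{lem:equivariant cut-and-paste} (2)) is correct, but note that it too presupposes $\phi(J)=J$ so that $\ell=\partial J$ is genuinely $\phi$-invariant.
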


\begin{proof}
The involution $\phi$ of $L$ defines a simplicial involution, denoted by $\overline{\phi}$, of the primitive tree $\mathcal{PT}(V)$.
Any finite-order automorphism of a tree has a fixed point, and hence there exists a point $v$ of
$\mathcal{PT}(V)$ fixed by the involution $\overline{\phi}$.
The point $v$ is either a vertex of $\mathcal{PT}(V)$ or the barycenter of an edge of $\mathcal{PT}(V)$.

Suppose first that $v$ is a vertex of $\mathcal{PT}(V)$.
Then $v$ is either a vertex of $\mathcal P(V)$ or the barycenter of a $2$-simplex of $\mathcal P(V)$.
The latter case occurs only for the lens space $L(3, 1)$ (see the definition of $\mathcal{PT}(V)$), but in this case,  $\overline{\phi}$ fixes at least one of the three vertices of the $2$-simplex.
Thus, in any cases, $\overline{\phi}$ fixes a vertex of $\mathcal P(V)$, the isotopy class of a primitive disk, say $E$, in $V$.
Since $\phi$ is an involution of $V$, $\partial E$ is isotopic to a loop invariant under $\phi$ (for we may assume that $\phi$ is an isometry with respect to a hyperbolic structure on $\partial V$, and then the unique geodesic in $\partial V$ isotopic to $\partial E$ is $\phi$-invariant).
Once $\partial E$ is $\phi$-invariant, we may replace $E$ with a $\phi$-invariant disk $J$ by Lemma \ref{lem:equivariant cut-and-paste} (1).
Since $J$ is primitive, it has a dual disk $D$ in $W$.
We may assume that $D$ and $\phi(D)$ meet transversely.
By Lemma \ref{lem:equivariant cut-and-paste} (1) and (2), we may replace $D$ with a dual disk $R$ such that $R$ and $\phi(R)$ are disjoint.

Next, suppose that $v$ is the barycenter of an edge of $\mathcal{PT}(V)$, and then the involution $\overline{\phi}$ exchanges the two end vertices of the edge.
We note that $L$ cannot be $L(3,1)$ in this case since a white vertex and a black vertex of $\mathcal{PT}(V)$ cannot be exchanged by the action of $\mathcal{G}$.
Thus, the edge is one of the edges of $\mathcal P(V)$ (the edges of type-$0$, type-$1$ and type-$2$) or bridges.

Suppose that the edge is an edge of type-$0$ or a bridge.
Then we have $\phi_*(x) = qx$ by Lemma \ref{lem:action} (2), but the involution $\phi$ satisfies $\phi_*(x) = -x$, which implies $q = 1$.
This is a contradiction since $\mathcal{PT}(V)$ for $L(p, 1)$ contains neither an edge of type-$0$ nor a bridge.
Therefore, the edge is of type-$1$ or type-$2$.

If the edge is of type-$1$, then the primitive pair of the end vertices of the edge have the unique common dual disk up to isotopy.
We denote by $v'$ the vertex of $\mathcal P(W)$ of the unique common dual disk, which is also fixed by $\overline{\phi}$.
So we can go back to the first case by replacing $V$ with $W$.

If the edge is of type-$2$, that is, $L$ is $L(2, 1)$, then the primitive pair of the end vertices of the edge have exactly two common dual disks up to isotopy, which form a primitive pair in $W$.
By Lemma \ref{lem:action} (1), the involution $\overline{\phi}$ also fixes each of the vertices of $\mathcal P(W)$ of the common dual disks.
We denote by $v'$ the vertex of one of the two common dual disks.
So choosing the vertex $v'$ of $\mathcal P(W)$ and replacing $V$ with $W$, we can go back to the first case.
\end{proof}

\begin{proof}[Proof of Theorem $\ref{thm:main_theorem}$]
It suffices to find meridian disks $J_0$ of $V_0$ and $R_0$ of $W_0$ such that $\partial J_0$ intersects $\partial R_0$ in a single point, $J_0$ intersects $K \cap V_0$ in a single point in its interior, and $R_0$ is disjoint from $K \cap W_0$.
Then letting $B_1$ be the union of $W_0$ with a regular neighborhood of $J_0$ in $V_0$,
and $B_2 = \overline{S^3 \setminus B_1}$, we have a $2$-bridge position for $K$ that stabilizes to the original $(1, 1)$-position.
Let $J$ and $R$ be the primitive disks of $V$ and $W$ respectively obtained in Lemma \ref{lem:key lemma}.
Since $J$ is $\phi$-invariant, we have either $J$ intersects the arc $\widehat{K} \cap V$ in a single point or $J$ contains the arc $\widehat{K} \cap V$. See Figure \ref{fig:covering}.

\begin{center}
\labellist
 \pinlabel {(a)} [B] at 105 8
 \pinlabel {(b)} [B] at 330 8
 \pinlabel {$/\phi$} [B] at 120 150
 \pinlabel {$/\phi$} [B] at 348 150

 \pinlabel {\small $J$} [B] at 120 208
 \pinlabel {\small $J$} [B] at 355 187
 \pinlabel {\small $J_0$} [B] at 148 60
 \pinlabel {\small $J_0$} [B] at 344 109

 \pinlabel {$\widehat{K} \cap V$} [B] at 105 285
 \pinlabel {$\widehat{K} \cap V$} [B] at 333 285
 \pinlabel {$K \cap V_0$} [B] at 145 20
 \pinlabel {$K \cap V_0$} [B] at 373 20
 \pinlabel {\small $\partial R_0$} [B] at 62 46
 \pinlabel {\small $\partial R_0$} [B] at 292 44
 \endlabellist
\includegraphics[width=12cm]{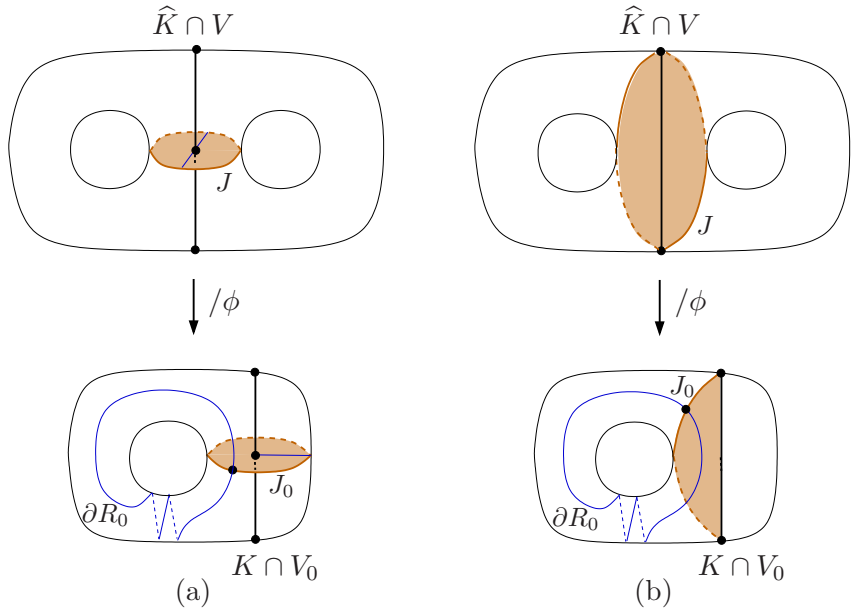}
\captionof{figure}{Two possibilities of the $2$-fold cover $V$ of $V_0$ branched along $K \cap V$.}
\label{fig:covering}
\end{center}

First, consider the case that $J$ intersects the arc $\widehat{K} \cap V$ in a single point.
Then the image $J_0$ of $J$ is a meridian disk of $V_0$ that intersects $K \cap V_0$ in a single point in its interior.
On the other hand the image $R_0$ of $R$ is a disk properly embedded in $W_0$ since $R$ is disjoint from $\phi(R)$.
Further $R_0$ is disjoint from $K \cap W_0$ since $R$ is disjoint from $\widehat{K} \cap W$.
In fact, $R_0$ should be an essential disk since $\partial R_0$ intersects $\partial J_0$ in a single point.
That is, $R_0$ is the desired meridian disk and so we are done.

Next, suppose that $J$ contains $\widehat{K} \cap V$.
Then the image $J_0$ is a bigon in $V_0$ bounded by the arc $K \cap V_0$ and the arc $c$ in $\partial V_0$ that is the image of $\partial J_0$. See Figure \ref{fig:covering} (b).
As in the first case, the image $R_0$ of $R$ is a disk properly embedded in $W_0$ that is disjoint from $K \cap W_0$.
Further, since the arc $c$ intersects $\partial R_0$ once and transversely in its interior,
$R_0$ is non-separating in $W_0$.
In fact, $K \cap W_0$ should intersects $R_0$ otherwise.
Consequently, $R_0$ is a meridian disk of $W_0$.
Now we move the arc $K \cap W_0$ to an arc $d$ by isotopy so that (1) $d$ lies in $\partial W_0 \setminus \partial R$, and (2) the arcs $c$ and $d$ meet only in their end points.
Then $K$ is isotopic to the knot $K_0 = c \cup d$ which lies in $\partial V_0$ and intersects $\partial R_0$ once and transversely.
Then $K_0$ is a $(k, 1)$-torus knot for some $k$, a trivial knot, that is a contradiction.
\end{proof}

\smallskip
\noindent {\bf Acknowledgments.}
Part of this work was carried out while the first and second authors were visiting
Korea Institute for Advanced Study (KIAS) in Seoul, Korea.
They are grateful to the institute and its staff for the warm hospitality.

\bibliographystyle{amsplain}

\begin{thebibliography}{20}

\bibitem{Bon83} Bonahon, F.,
Diff\'eotopies des espaces lenticulaires, Topology {\bf 22}
(1983), no. 3, 305--314.

\bibitem{BO83}
Bonahon, F., Otal, J.-P.,
Scindements de Heegaard des espaces lenticulaires, Ann. Sci. \'Ec. Norm. Sup. (4)
{\bf 16} (1983), no. 3, 451--466.

\bibitem{Cho13} Cho, S.,
Genus two Goeritz groups of lens spaces,
Pacific J. Math.  \textbf{265}  (2013),  no. 1, 1--16.

\bibitem{CK16} Cho, S., Koda,  Y.,
Connected primitive disk complexes and genus two Goeritz groups of lens spaces,
Int. Math. Res. Not. IMRN  2016,  no. 23, 7302--7340.

\bibitem{CK17} Cho, S., Koda,  Y.,
The mapping class groups of reducible Heegaard splittings of genus two,
preprint.

\bibitem{KS00}
Kobayashi, T., Saeki, O.,
The Rubinstein-Scharlemann graphic of a $3$-manifold as the discriminant set of a stable map,
Pacific J. Math. {\bf 195}  (2000),  no. 1, 101--156.

\bibitem{Sch56}
Schubert, H., Knoten mit zwei Br\"{u}cken,
Math. Z.  {\bf 65}  (1956), 133--170.

\bibitem{McC91}
McCullough, D.,
Virtually geometrically finite
mapping class groups of $3$-manifolds, J. Differential Geom.
\textbf{33} (1991), no. 1, 1--65.

\bibitem{Rol76} Rolfsen, D.,
\emph{Knots and links},
Mathematics Lecture Series, No. 7, Publish or Perish, Inc., Berkeley, Calif., 1976.

\end{thebibliography}

\end{document}